\documentclass{amsart}
\usepackage{amssymb}
\usepackage{graphicx}
\usepackage{amscd}

\setcounter{MaxMatrixCols}{10}

\newtheorem{theorem}{Theorem}
\theoremstyle{plain}

\newtheorem{corollary}{Corollary}

\newtheorem{example}{Example}

\newtheorem{lemma}{Lemma}

\newtheorem{proposition}{Proposition}
\newtheorem{remark}{Remark}

\numberwithin{equation}{section}

\input{tcilatex}

\begin{document}
\title[Ideals in\ LA-semigroups]{Ideals in Left Almost Semigroups}
\author{Qaiser Mushtaq and Madad Khan}
\address{Department of mathematics, Quaid-i-Azam University, Islamabad,
Pakistan.}
\email{qmushtaq@isb.apollo.net.pk}
\email{madadmath@yahoo.com}
\subjclass[2000]{$20$M$10$ and $20$N$99$}
\keywords{LA-semigroup, Regular LA-semigroup, Medial law, ideals, minimal
and prime ideals.}

\begin{abstract}
A left almost semigroup (LA-semigroup) or an Abel-Grassmann's groupoid
(AG-groupoid) is investigated in several papers. In this paper we have
discussed ideals in LA-semigroups. Specifically, we have shown that every
ideal in an LA-semigroup $S$ with left identity $e$ is prime if and only if
it is idempotent and the set of ideals of $S$ is totally ordered under
inclusion. We have shown that an ideal of $S$ is prime if and only if it is
semiprime and strongly irreducible. We have proved also that every ideal in
a regular LA-semigroup $S$ is prime if and only if the set of ideals of $S$
is totally ordered under inclusion. We have proved in the end that every
ideal in $S$ is prime if and only if it is strongly irreducible and the set
of ideals of $S$ form a semilattice.
\end{abstract}

\maketitle

\section{Introduction}

A left almost-semigroup (LA-semigroup) $[6]$ or Abel-Grassmann's groupoid $($%
AG-groupoid$)$ $[10]$ is a groupoid $S$ with left invertive law:

\begin{equation}
(ab)c=(cb)a\text{, for all }a\text{, }b\text{, }c\in S\text{.}  \tag{1}
\end{equation}

Every LA-semigroup $S$ satisfy the medial law $[5]:$

\begin{equation}
(ab)(cd)=(ac)(bd)\text{,}\ \text{for all }a,b,c,d\in S\text{.}  \tag{2}
\end{equation}

It has been shown in $[6]$ that if an LA-semigroup contains a left identity
then it is unique. It has been proved also that an LA-semigroup with right
identity is a commutative monoid, that is, a commutative semigroup with
identity element.

\section{Ideals in an LA-semigroup}

An LA-semigroup $S$ is called regular if for each $a$ in $S$ there exists $x$
in $S$ such that $a=(ax)a$. An LA-semigroup $S$ is called inverse
LA-semigroup if for each $a$ in $S$ there exists $x$ in $S$ such that, $%
a=(ax)a$ and $x=(xa)x$. If $S$ is a regular LA-semigroup then it is easy to
see that $S=S^{2}$.

\begin{lemma}
If $S$ is an LA-semigroup with left identity $e$ then $SS=S$ and $S=eS=Se$.
\end{lemma}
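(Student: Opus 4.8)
The plan is to treat the three asserted equalities separately, observing first that in each case the inclusion ``$\subseteq S$'' is automatic since $S$ is a groupoid, hence closed under its operation; so all the work lies in the reverse inclusions.

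For $eS = S$ there is nothing to do beyond unwinding the definition of a left identity: every $a \in S$ satisfies $a = ea \in eS$, whence $S \subseteq eS$. For $SS = S$, I would simply note that $e \in S$ gives $eS \subseteq SS$, so by the previous line $S = eS \subseteq SS \subseteq S$, which forces $SS = S$.

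The only equality that genuinely uses the LA-semigroup axioms is $Se = S$, and even there only $S \subseteq Se$ needs an argument. Here I would instantiate the left invertive law $(1)$, written as $(xy)z = (zy)x$, with $x = e$, $y = a$, $z = c$, obtaining $(ea)c = (ca)e$; since $ea = a$, this reads $ac = (ca)e$ for all $a, c \in S$. Specialising to $a = e$ gives $ec = (ce)e$, and applying the left-identity property once more to the left-hand side yields $c = (ce)e \in Se$ for every $c \in S$. Combined with $Se \subseteq S$, this gives $Se = S$, completing the proof.

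I do not anticipate a real obstacle. The one point requiring care is to use exclusively the left invertive law $(1)$ and the defining property $ea = a$ of the left identity: an LA-semigroup has neither associativity nor commutativity in general, so the key identity $ac = (ca)e$ must be produced as a direct instance of $(1)$ rather than by informally ``relocating'' the factor $e$.
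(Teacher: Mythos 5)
Your proof is correct and follows essentially the same route as the paper: $S\subseteq SS$ via $x=ex$, and $Se=S$ via the left invertive law. The only cosmetic difference is that you instantiate $(1)$ elementwise to get $c=(ce)e\in Se$, whereas the paper performs the same computation at the level of set products, $Se=(SS)e=(eS)S=SS=S$.
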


\begin{proof}
If $S$ is an LA-semigroup with left identity $e$. Then $x\in S$ implies that 
\begin{equation*}
x=ex\in SS\text{ and so }S\subseteq SS\text{. That is}
\end{equation*}%
$S=SS$. Now using the facts, $eS=S$, $SS=S$ and $(1)$, we obtain 
\begin{equation*}
Se=(SS)e=(eS)S=SS=S
\end{equation*}%
Hence $S=eS=Se$.
\end{proof}

\begin{remark}
If $S$ is an LA-semigroup with left identity $e$ then $S=S^{2}$, but the
converse is not necessarily true, for example regular LA-semigroup.
\end{remark}

\begin{example}
Let $S=\{1,2,3,4,5,6,7\}$ the binary operation $``\cdot "$ be defined on $S$
as follows:%
\begin{equation*}
\FRAME{itbpF}{144.3125pt}{151.875pt}{0in}{}{}{Figure}{\special{language
"Scientific Word";type "GRAPHIC";maintain-aspect-ratio TRUE;display
"USEDEF";valid_file "T";width 144.3125pt;height 151.875pt;depth
0in;original-width 141.5pt;original-height 149.0625pt;cropleft "0";croptop
"1";cropright "1";cropbottom "0";tempfilename
'IZ4F5H00.wmf';tempfile-properties "XPR";}}\text{.}
\end{equation*}

Then $(S,\cdot )$ is an LA-semigroup without left identity, but $S=S^{2}$.
\end{example}

\begin{proposition}
If $S$ is an LA-semigroup with left identity $e$ then $%
(xy)^{2}=(x^{2}y^{2})=(y^{2}x^{2})$, for all $x,y$ in $S$.
\end{proposition}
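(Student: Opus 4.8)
The plan is to prove the two equalities $(xy)^{2}=x^{2}y^{2}$ and $x^{2}y^{2}=y^{2}x^{2}$ separately. The first is immediate from the medial law $(2)$: $(xy)^{2}=(xy)(xy)=(xx)(yy)=x^{2}y^{2}$, where the middle step is $(2)$ with $a,b,c,d$ replaced by $x,y,x,y$. The left identity plays no role here. By the same computation, $(yx)^{2}=(yx)(yx)=(yy)(xx)=y^{2}x^{2}$, which I will reuse below.

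For the second equality the left identity $e$ is genuinely needed, and I would first isolate the auxiliary identity
\[
a(bc)=b(ac)\qquad\text{for all }a,b,c\in S ,
\]
valid in every LA-semigroup with left identity: one has $a(bc)=(ea)(bc)=(eb)(ac)=b(ac)$, using $ea=a$, then the medial law $(2)$, then $eb=b$. With this in hand I would argue
\[
x^{2}y^{2}=(xx)(yy)=y((xx)y)=y((yx)x)=(yx)(yx)=y^{2}x^{2}.
\]
Here the second equality is the auxiliary identity applied with $(a,b,c)=(xx,y,y)$; the third is the left invertive law $(1)$, since $(xx)y=(yx)x$; the fourth is the auxiliary identity again, with $(a,b,c)=(y,yx,x)$; and the last is the medial computation for $(yx)^{2}$ recorded above. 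Chaining the two parts yields $(xy)^{2}=x^{2}y^{2}=y^{2}x^{2}$.

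The argument is essentially mechanical once the auxiliary identity $a(bc)=b(ac)$ is available, so the only real decision is to pass through it — this is exactly the point at which the left identity is used — and then to rebracket via $(1)$ so that a second application of it puts the product into the symmetric form $(yx)(yx)$, which the medial law collapses to $y^{2}x^{2}$. I expect no serious obstacle beyond locating this short chain of substitutions.
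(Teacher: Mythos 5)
Your proof is correct: the medial law gives $(xy)^2=x^2y^2$ directly, and your auxiliary identity $a(bc)=b(ac)$ (derived by writing $a=ea$, $b=eb$ and applying $(2)$) together with the left invertive law $(1)$ correctly yields $x^2y^2=(yx)(yx)=y^2x^2$. The paper omits the proof entirely ("The proof is easy"), so there is nothing to compare against, but your argument is the standard one and every step checks out.
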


The proof is easy. Consequently $a^{2}S$ is an ideal of $S$.

A subset $I$ of an LA-semigroup $S$ is called a right (left) ideal if $%
IS\subseteq I$ $(SI\subseteq I)$, and is called an ideal if it is a two
sided ideal. If $S$ is an LA-semigroup with left identity $e$, then $%
S(Sa)\subseteq Sa$ in $[10]$.

Also $(aS)S\subseteq aS$, if $a$ is an idempotent in an LA-semigroup $S$
with left identity.

\begin{proposition}
If $S$ is an LA-semigroup with left identity $e$ then every right ideal is
an ideal.
\end{proposition}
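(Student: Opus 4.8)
The statement to establish is that a right ideal $I$ of $S$ (so $IS\subseteq I$) is automatically a left ideal, i.e. $SI\subseteq I$; together with the hypothesis this makes $I$ two-sided. So the plan is to take an arbitrary $s\in S$ and $a\in I$ and show $sa\in I$, using only the left identity $e$ and the left invertive law $(1)$.

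The key manoeuvre I would use is to insert the left identity and then swap factors via $(1)$. Writing $s=es$, we get
\begin{equation*}
sa=(es)a=(as)e,
\end{equation*}
where the last equality is the left invertive law $(ab)c=(cb)a$ applied with $a\mapsto e$, $b\mapsto s$, $c\mapsto a$.

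Now the right-ideal hypothesis does the rest: since $a\in I$ and $s\in S$ we have $as\in IS\subseteq I$, and then applying $IS\subseteq I$ once more (with the element $e\in S$) gives $(as)e\in IS\subseteq I$. Hence $sa=(as)e\in I$, so $SI\subseteq I$ and $I$ is an ideal.

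I do not expect any genuine obstacle here; the whole content is the observation that a left identity lets one turn a left multiplication $sa$ into a right multiplication, after which the definition of a right ideal applies directly (used twice). The only thing to be careful about is the bookkeeping in the application of $(1)$ — making sure the substitution of variables is the correct one — and noting that $e\in S$ so that $(as)e$ is indeed of the form (element of $I$) times (element of $S$).
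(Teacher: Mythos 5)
Your proof is correct and is essentially the paper's own argument: both rewrite $sa=(es)a=(as)e$ via the left invertive law and then apply the right-ideal property. You merely spell out the two uses of $IS\subseteq I$ more explicitly than the paper does.
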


\begin{proof}
Let $I$ be a right ideal of an LA-semigroup $S$ and $x\in S$, $i\in I$. Then
by $(1)$, we get $si=(es)i=(is)e\in I$. Hence $I$ is a left ideal.
\end{proof}

\begin{remark}
$(1)\cdot $ If $S=S^{2}$ then every right ideal is also a left ideal and $%
SI\subseteq IS$.

$(2)\cdot $ If $I$ is a right ideal of $S$, then $SI$ is a left and $IS$ is
a right ideal of $S$.
\end{remark}

It is obvious that the intersection of two or more ideals is an ideal.\
Similarly the union of two or more ideals is also an ideal.

\begin{lemma}
If $I$ is a left ideal of an LA-semigroup $S$ with left identity $e$, then $%
aI$ is a left ideal of $S$.
\end{lemma}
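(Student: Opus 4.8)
The plan is to verify directly the defining inclusion $S(aI)\subseteq aI$, where $aI=\{ai:i\in I\}$. Take an arbitrary element of $S(aI)$; it has the form $s(ai)$ for some $s\in S$ and $i\in I$. The key idea is to push the leading factor $s$ past $a$ using the medial law $(2)$, and the only way to create a second factor to feed into the medial law is to insert the left identity $e$.

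Concretely, I would write $s=es$ (left identity), so that $s(ai)=(es)(ai)$. Applying the medial law $(2)$ yields $(es)(ai)=(ea)(si)$, and since $ea=a$ this equals $a(si)$. Now invoke the hypothesis that $I$ is a left ideal: $si\in SI\subseteq I$, hence $a(si)\in aI$. Therefore $s(ai)\in aI$, and since $s,i$ were arbitrary, $S(aI)\subseteq aI$, i.e. $aI$ is a left ideal of $S$. (Nonemptiness of $aI$ follows from nonemptiness of $I$.)

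I do not anticipate a genuine obstacle: the proof is a one-line manipulation once one notices the trick of replacing $s$ by $es$ so that $(2)$ becomes applicable — exactly the same device used in the proof of Proposition~2 above. The step that carries all the weight is the single application of the medial law together with $ea=a$; everything else is bookkeeping with the definition of a left ideal.
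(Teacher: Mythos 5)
Your proof is correct and is essentially identical to the paper's: both replace $s$ by $es$, apply the medial law to get $(es)(ai)=(ea)(si)=a(si)$, and conclude $a(si)\in aI$ since $si\in I$. No further comment is needed.
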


\begin{proof}
If $I$ is a left ideal of $S$, then by $(1)$ and $(2)$, and the fact that $%
ai\in aI$, we get 
\begin{equation*}
s(ai)=(es)(ai)=(ea)(si)=a(si)\in aI\text{.}
\end{equation*}
\end{proof}

\begin{lemma}
If $I$ is a right ideal of an LA-semigroup $S$ with left identity $e$ then $%
I^{2}$ is an ideal of $S$.
\end{lemma}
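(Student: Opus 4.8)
The plan is to verify the two containments $I^{2}S\subseteq I^{2}$ and $SI^{2}\subseteq I^{2}$ separately, where throughout $I^{2}=II=\{ij:i,j\in I\}$. The only inputs I expect to need are the left invertive law $(1)$, the medial law $(2)$, the identity $SS=S$ from Lemma 1, and the earlier Proposition that in an LA-semigroup with left identity every right ideal is in fact a two-sided ideal. That last fact is what lets me treat $I$ as a left ideal as well, i.e. $SI\subseteq I$, and it will be used in both halves.

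First I would handle the right-ideal condition. A typical element of $I^{2}S$ has the form $(ij)s$ with $i,j\in I$ and $s\in S$, and the left invertive law $(1)$ rewrites it as $(sj)i$. Since $sj\in SI\subseteq I$ and then $(sj)i\in II=I^{2}$, this gives $I^{2}S=(II)S\subseteq(SI)I\subseteq I^{2}$. So $I^{2}$ is a right ideal after essentially one application of $(1)$.

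The left-ideal condition is the step I expect to require a small trick. The product $SI^{2}=S(II)$ is not directly in the shape to which the medial law applies, so I would first replace the leading factor $S$ by $SS$ using $S=SS$ from Lemma 1, obtaining $S(II)=(SS)(II)$. Now the medial law $(2)$ gives $(SS)(II)=(SI)(SI)$, and since $SI\subseteq I$ this is contained in $II=I^{2}$; hence $SI^{2}\subseteq I^{2}$. (Alternatively one could write $I^{2}=\bigcup_{a\in I}aI$ and invoke the earlier lemma that each $aI$ is a left ideal together with the remark that a union of left ideals is a left ideal, but the direct medial-law computation seems cleaner.)

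Combining the two containments shows that $I^{2}$ is a two-sided ideal of $S$, which is the assertion. The only point needing a moment's thought is the reshuffling $S(II)=(SS)(II)=(SI)(SI)$ in the left-ideal part; everything else is a one-line application of $(1)$ or of the fact that $I$ is already known to be two-sided.
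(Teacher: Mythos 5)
Your proof is correct. The right-ideal half is exactly the paper's argument: $(ij)s=(sj)i$ by the left invertive law, with $sj\in SI\subseteq I$ supplied by the earlier Proposition that a right ideal of an LA-semigroup with left identity is two-sided. Where you diverge is the left-ideal half: you prove $SI^{2}\subseteq I^{2}$ directly via $S(II)=(SS)(II)=(SI)(SI)\subseteq II$, using $S=SS$ and the medial law, whereas the paper simply observes that $I^{2}$ is now a right ideal and applies that same Proposition once more to conclude it is a left ideal. The paper's route is shorter and reuses machinery already in place; your computation is self-contained and would survive even in a setting where the right-implies-two-sided Proposition were unavailable for $I^{2}$ itself --- though note that you still lean on that Proposition to get $SI\subseteq I$, so you have not actually eliminated the dependence on it, only on its second application. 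Both arguments are valid.
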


\begin{proof}
Let $x\in I^{2}$ then $x=ij$ where $i$, $j\in I$. Now using $(1)$, we get $%
xs=(ij)s=(sj)i\in II=I^{2}$. This implies that $I^{2}$ is a right ideal and
by proposition $1,$ it becomes a left ideal.
\end{proof}

\begin{remark}
If $I$ is a left ideal of\thinspace $S$ then $I^{2}$ is an ideal of $S$.
\end{remark}

\begin{proposition}
A proper ideal $M$ of an LA-semigroup $S$ with left identity $e$, is minimal
if and only if $M=a^{2}M$, for all $a\in S$.
\end{proposition}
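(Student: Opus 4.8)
The plan is to prove the two implications separately, using the set $a^{2}M$ as the pivot (this makes sense since $a^{2}S$ is already known to be an ideal). The preliminary observation, needed in both directions, is that for every $a\in S$ the set $a^{2}M$ is an ideal of $S$ contained in $M$, and nonempty whenever $M$ is. The inclusion $a^{2}M\subseteq M$ is immediate because $a^{2}\in S$ and $M$ is a left ideal. For the ideal property I would argue directly from $(1)$ and $(2)$, first recording the identity $a^{2}e=(aa)e=(ea)a=a^{2}$: then $s(a^{2}m)=(es)(a^{2}m)=(ea^{2})(sm)=a^{2}(sm)\in a^{2}M$ since $sm\in SM\subseteq M$, and $(a^{2}m)s=(a^{2}m)(es)=(a^{2}e)(ms)=a^{2}(ms)\in a^{2}M$ since $ms\in MS\subseteq M$.

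For the forward direction, assume $M$ is minimal and fix $a\in S$. By the preliminary step $a^{2}M$ is a nonempty ideal of $S$ with $a^{2}M\subseteq M$, so minimality of $M$ forces $a^{2}M=M$; as $a$ was arbitrary, $M=a^{2}M$ for all $a\in S$.

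For the converse, assume $M$ is a proper ideal with $M=a^{2}M$ for all $a\in S$, and let $N$ be any nonempty ideal of $S$ with $N\subseteq M$; the goal is to show $N=M$. Pick $b\in N$. Then $b^{2}\in N$, and since $N$ is a right ideal we get $b^{2}M\subseteq NS\subseteq N$. But $b^{2}M=M$ by hypothesis, so $M\subseteq N$, whence $N=M$. Hence $M$ contains no proper nonempty ideal strictly inside it, i.e. $M$ is minimal.

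The one step that needs genuine care is the preliminary one --- verifying that $a^{2}M$ is a two-sided ideal --- where $(1)$ and $(2)$ must be combined in the right order and where the small identity $a^{2}e=a^{2}$ is exactly what pulls the square back out on the right. After that, the forward direction is merely an invocation of minimality, and the converse merely the absorption property of $N$.
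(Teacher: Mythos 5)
Your proof is correct and follows essentially the same route as the paper: the pivot in both is that $a^{2}M$ is an ideal contained in $M$, with minimality forcing equality in one direction and, in the other, the absorption $b^{2}M\subseteq N$ for $b$ in a sub-ideal $N$ forcing $M\subseteq N$. You additionally supply the explicit verification that $a^{2}M$ is two-sided (which the paper merely asserts) and phrase the converse as a direct argument rather than the paper's contradiction via a ``minimal ideal properly contained in $M$,'' but the underlying idea is identical.
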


\begin{proof}
Let $M$ be the minimal ideal of $S$, as $M^{2}$ is an ideal so $M=M^{2}$.
Now using $(1)$ we see that $a^{2}M$ is an ideal of $S$, contain in $M$ and
as $M$ is minimal so $M=a^{2}M$.

Conversely, assume that $M=a^{2}M$, for all $a\in S$. Let $A$ be the minimal
ideal properly contain in $M$ containing $a$, then $M=a^{2}M\subseteq A$,
which is a contradiction. Hence $M$ is a minimal ideal.
\end{proof}

An ideal $I$ of an LA-semigroup $S$ is called minimal if and only if it does
not contain any ideal of $S$ other than itself.

\begin{theorem}
If $I$ is a minimal left ideal of an LA-semigroup $S$ with left identity $e$%
, then $a^{2}I^{2}$ is a minimal ideal of $S$.
\end{theorem}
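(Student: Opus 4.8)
The plan is to prove the statement in two stages: first that $a^{2}I^{2}$ is a two-sided ideal of $S$, and then that it is minimal, by identifying it with $I$ itself.

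For the first stage I would exploit the medial law $(2)$. Ranging over $i,j\in I$, the identity $(ai)(aj)=(aa)(ij)=a^{2}(ij)$ shows that $a^{2}I^{2}=(aI)^{2}$. Since $I$ is a left ideal, the Lemma on $aI$ gives that $aI$ is again a left ideal of $S$, and then the Remark that the square of a left ideal is an ideal yields that $(aI)^{2}=a^{2}I^{2}$ is a two-sided ideal of $S$. (Alternatively one can check directly that $I^{2}S\subseteq I^{2}$ using $(1)$ and then invoke Proposition $1$.)

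For the second stage I would first observe that $a^{2}I^{2}\subseteq I$: indeed $I^{2}=II\subseteq SI\subseteq I$, hence $a^{2}I^{2}\subseteq a^{2}I\subseteq SI\subseteq I$. Thus $a^{2}I^{2}$ is a nonempty left ideal of $S$ contained in the minimal left ideal $I$, which forces $a^{2}I^{2}=I$. Now suppose $J$ is any ideal of $S$ with $J\subseteq a^{2}I^{2}=I$. Then $J$ is in particular a left ideal contained in $I$, so minimality of $I$ gives $J=I=a^{2}I^{2}$. Hence $a^{2}I^{2}$ contains no ideal of $S$ other than itself, i.e.\ it is a minimal ideal.

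The only real content lies in the first stage: recognizing that the medial law collapses $a^{2}I^{2}$ to the square $(aI)^{2}$ of a left ideal, so that the earlier results apply, together with the observation that $a^{2}I^{2}$ is trapped inside $I$. Once these two facts are available, the minimality of $I$ as a left ideal does all the remaining work and no further computation is needed; the points to watch are the bookkeeping between set products and element products and the usual nonemptiness convention for (minimal) ideals.
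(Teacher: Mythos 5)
Your proof is correct, but it takes a genuinely different route from the paper in the crucial minimality step. For the ``is an ideal'' part, the paper computes directly with the left invertive law, $(a^{2}I^{2})S=(SI^{2})a^{2}\subseteq I^{2}a^{2}=a^{2}I^{2}$, and then applies the right-ideal-implies-ideal proposition; you instead use the medial law to identify $a^{2}I^{2}$ with $(aI)^{2}$ and quote the lemma on $aI$ together with the remark on squares of left ideals --- both arguments are sound, though your parenthetical alternative (showing only $I^{2}S\subseteq I^{2}$) would not by itself dispose of the factor $a^{2}$ and should not be relied on. For minimality, the paper argues by contradiction: given a proper ideal $H\subsetneq a^{2}I^{2}$ it forms the preimage $H'=\{r\in I:ar\in H\}$, checks via $a(sy)=s(ay)$ that $H'$ is a left ideal properly contained in $I$, and contradicts the minimality of $I$. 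You avoid this construction entirely by noting $a^{2}I^{2}\subseteq a^{2}I\subseteq SI\subseteq I$, so that $a^{2}I^{2}$ is a nonempty left ideal trapped inside the minimal left ideal $I$ and therefore equals $I$; any ideal contained in it is then a left ideal contained in $I$ and must coincide with it. Your route is more economical, sidesteps the delicate points in the paper's argument (why $H'$ is nonempty and \emph{properly} contained in $I$), and yields the sharper conclusion $a^{2}I^{2}=I$, i.e.\ that $I$ itself is already a minimal two-sided ideal.
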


\begin{proof}
Let $ai\in aI$ where $I$ is a minimal left ideal of an LA-semigroup $S$ with
left identity $e$. Then using $(1)$ and Proposition $1$, we get, $%
(a^{2}I^{2})S=(SI^{2})a^{2}\subseteq I^{2}a^{2}=a^{2}I^{2}$, which shows
that $a^{2}I^{2}$ is a right ideal of $S$ and by proposition $2$, it becomes
left. Let $H$ be a non-empty ideal of $S$ properly contained in $a^{2}I^{2}$%
. Define $H%
{\acute{}}%
=\{r\in I:ar\in H\}$. Then $a(sy)=s(ay)\in SH\subseteq H$ imply that $H%
{\acute{}}%
$ is a left ideal of $S$ properly contained in $I$. But this is a
contradiction to the minimality of $I$. Hence $a^{2}I^{2}$ is a minimal
ideal of $S$.
\end{proof}

An ideal $P$ of an LA-semigroup $S$ is called prime if $AB\subseteq P$
implies that either $A\subseteq P$ or $B\subseteq P$, for all ideals $A$ and 
$B$ in $S$. An ideal $P$ of an LA-semigroup $S$ is said to be semiprime if $%
I^{2}\subseteq P$ implies that $I\subseteq P$, for any ideal $I$ of $S$. An
LA-semigroup $S$ is said to be fully semiprime if every ideal of $S$ is
semiprime. An LA-semigroup $S$ is called fully prime if every ideal of $S$
is prime. The set of ideals of an LA-semigroup $S$ is called totally ordered
under inclusion if for all ideals $A$, $B$ of $S$, either $A\subseteq B$ or $%
B\subseteq A$ and is denoted by ideal$(S)$.

\begin{theorem}
An LA-semigroup $S$ with left identity $e$ is fully prime\ if and only if
every ideal is idempotent and ideal$(S)$is totally ordered under inclusion.
\end{theorem}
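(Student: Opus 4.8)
The plan is to prove both directions by exploiting the structure lemmas for LA-semigroups with left identity, especially the fact that products of ideals behave well under the left invertive law (1) and medial law (2). I would first record the small observations that will be used repeatedly: for ideals $A$, $B$ of $S$ one has $AB \subseteq A \cap B$ (since $AB \subseteq SB \subseteq B$ and $AB \subseteq AS \subseteq A$), and conversely $A \cap B$ is an ideal with $(A\cap B)^2 \subseteq AB$ when suitable idempotence is available; also that $AB$ is itself an ideal of $S$, which follows from Remark 2(2) together with Proposition 2 (every right ideal is an ideal). These reduce statements about arbitrary ideals to statements about the single ideal $AB$.

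For the forward direction, assume $S$ is fully prime. To see every ideal $I$ is idempotent: $I^2$ is an ideal of $S$ (Lemma on $I^2$, or the remark that for a left ideal $I$, $I^2$ is an ideal), and $I^2 \subseteq I^2$ trivially, so primeness applied to the containment $I \cdot I \subseteq I^2$ forces $I \subseteq I^2$; since always $I^2 \subseteq I$, we get $I = I^2$. To see ideal$(S)$ is totally ordered: given ideals $A$, $B$, consider the ideal $A \cap B$. Because every ideal is idempotent, $A \cap B = (A\cap B)^2 \subseteq AB \subseteq A \cap B$, so $A \cap B = AB$. Now $AB \subseteq A \cap B \subseteq A \cap B$, and primeness of the prime ideal $A\cap B$ applied to $AB \subseteq A\cap B$ gives $A \subseteq A\cap B$ or $B \subseteq A \cap B$; the first yields $A \subseteq B$, the second $B \subseteq A$. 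Hence any two ideals are comparable.

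For the converse, assume every ideal is idempotent and ideal$(S)$ is totally ordered; let $P$ be any ideal and suppose $AB \subseteq P$ for ideals $A$, $B$. By total order, without loss of generality $A \subseteq B$. Then, using idempotence, $A = A^2 \subseteq AB \subseteq P$ (here one needs $A^2 \subseteq AB$, which follows from $A \subseteq B$). Thus $A \subseteq P$, so $P$ is prime; since $P$ was arbitrary, $S$ is fully prime.

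The main obstacle I anticipate is the identity $A \cap B = AB$ in the forward direction — i.e. getting the inclusion $A \cap B \subseteq AB$. The easy inclusion is $AB \subseteq A \cap B$; the reverse should come from idempotence of the ideal $A \cap B$ via $A\cap B = (A\cap B)^2 = (A\cap B)(A\cap B) \subseteq AB$, but verifying that $(A\cap B)(A\cap B)\subseteq AB$ rather than just $\subseteq A\cap B$ is the delicate point and may require the medial law (2) together with $S = SS = eS = Se$ from Lemma 1; if that identification is awkward, an alternative is to apply primeness directly to $AB \subseteq A\cap B$ using only $AB \subseteq A \cap B$ and idempotence $A \cap B = (A\cap B)^2$, noting $(A \cap B)^2 \subseteq AB$ so that $A\cap B$ being prime and dividing $AB$ forces $A$ or $B$ into $A \cap B$. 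Getting this containment clean is where the real work lies; the rest is bookkeeping with (1), (2), and Lemma 1.
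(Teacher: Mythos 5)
Your proposal is correct and follows essentially the same route as the paper: idempotence of $I$ comes from applying primeness of the ideal $I^{2}$ to $II\subseteq I^{2}$, comparability comes from applying primeness of $A\cap B$ to the easy containment $AB\subseteq A\cap B$, and the converse uses total order plus idempotence via $A=A^{2}\subseteq AB\subseteq P$. The identity $A\cap B=AB$ that you flag as the delicate point is in fact unneeded (as you yourself note in your fallback), since only the inclusion $AB\subseteq A\cap B$ enters the argument; the paper dispenses with it entirely.
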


\begin{proof}
Assume that an LA-semigroup $S$ is fully prime. Let $I$ be the ideal of $S$.
Then by Lemma $3$, $I^{2}$ becomes an ideal of $S$ and obviously $%
I^{2}\subseteq I$. Now 
\begin{equation*}
II\subseteq I^{2}\text{ yields }I\subseteq I^{2}\text{ and hence}
\end{equation*}%
$I=I^{2}$. Let $P$, $Q$ be ideals of $S$ and $PQ\subseteq P$ , $PQ\subseteq
Q $ imply that $PQ\subseteq P\cap Q$. Since $P\cap Q$ is prime, so $%
P\subseteq P\cap Q$ or $Q\subseteq P\cap Q$ which further imply that $%
P\subseteq Q$ or $Q\subseteq P$. Hence ideal$(S)$ is totally ordered under
inclusion.

Conversely, assume that every ideal of $S$ is idempotent and ideal$(S)$ is
totally ordered under inclusion. Let $I$, $J$ and $P$ be the ideals of $S$
with $IJ\subseteq P$ such that $I\subseteq J$. Since $I$ is idempotent, $%
I=I^{2}=II\subseteq IJ\subseteq P$ implies that $I\subseteq P$ and so $S$ is
fully prime.
\end{proof}

If $S$ is an LA-semigroup with left identity $e$ then the principal left
ideal generated by $a$ is defined by $\langle a\rangle =Sa=\{sa:s\in S\}$,
where $a$ is any element of $S$. Let $P$ be a left ideal of an LA-semigroup $%
S$, $P$ is called quasi-prime if for left ideals $A$, $B$ of $S$ such that $%
AB\subseteq P$, we have $A\subseteq P$ or $B\subseteq P$. $P$ is called
quasi-semiprime if for any left ideal $I$ of $S$ such that $I^{2}\subseteq P$%
, we have $I\subseteq P$.

\begin{theorem}
If $S$ is an LA-semigroup with left identity $e$, then a left ideal $P$ of $%
S $ is quasi-prime if and only if $a(Sb)\subseteq P$ implies that either $%
a\in P$ or $b\in P$, where $a$, $b\in S$.
\end{theorem}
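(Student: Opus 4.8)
The plan is to handle both implications through the principal left ideals $\langle a\rangle=Sa$ and $\langle b\rangle=Sb$, reducing the set-theoretic condition $a(Sb)\subseteq P$ to the ideal-theoretic condition $(Sa)(Sb)\subseteq P$ by means of the medial law.

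For the forward implication, suppose $P$ is quasi-prime and let $a,b\in S$ with $a(Sb)\subseteq P$. First I would note that $Sa$ and $Sb$ are left ideals of $S$, since $a=ea\in Sa$, $S(Sa)\subseteq Sa$ (the fact recorded above from $[10]$), and similarly for $b$. The key step is the identity $a(Sb)=(Sa)(Sb)$: using the medial law $(2)$, $(sa)(tb)=(st)(ab)$ for all $s,t\in S$, so $(Sa)(Sb)=(SS)(ab)=S(ab)$ by Lemma $1$; writing $a=ea$, again by $(2)$, $a(sb)=(ea)(sb)=(es)(ab)$, so $a(Sb)=(eS)(ab)=S(ab)$ since $eS=S$. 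Hence $(Sa)(Sb)=a(Sb)\subseteq P$, and quasi-primeness of $P$ forces $Sa\subseteq P$ or $Sb\subseteq P$; since $a\in Sa$ and $b\in Sb$, we obtain $a\in P$ or $b\in P$.

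For the converse, assume the stated condition on elements and let $A,B$ be left ideals of $S$ with $AB\subseteq P$. If $A\subseteq P$ there is nothing to prove, so pick $a\in A$ with $a\notin P$ and let $b\in B$ be arbitrary. Since $B$ is a left ideal, $Sb\subseteq SB\subseteq B$, whence $a(Sb)\subseteq AB\subseteq P$. The hypothesis gives $a\in P$ or $b\in P$, and since $a\notin P$ we get $b\in P$. As $b\in B$ was arbitrary, $B\subseteq P$, so $P$ is quasi-prime.

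The only non-routine point is the identity $a(Sb)=(Sa)(Sb)$, and the trick there is simply to insert the left identity (write $a=ea$) so that the medial law applies; everything else is direct from the definitions of left ideal and of quasi-prime.
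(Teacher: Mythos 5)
Your proof is correct and follows essentially the same route as the paper's: both directions reduce the element condition to the principal left ideals $Sa$ and $Sb$ via the medial law, and your converse argument is identical to the paper's. The only difference is cosmetic --- you establish the equality $a(Sb)=(Sa)(Sb)=S(ab)$ directly by inserting the left identity, whereas the paper instead computes $S(a(Sb))=(Sa)(Sb)$ and concludes from $S(a(Sb))\subseteq SP\subseteq P$; your version is slightly cleaner but rests on the same idea.
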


\begin{proof}
Let $P$ be a left ideal of an LA semigroup $S$ with identity $e$. Now
suppose that $a(Sb)\subseteq P$. Then by $(2)$, we get 
\begin{eqnarray*}
S(a(Sb)) &\subseteq &SP\subseteq P\text{, that is} \\
S(a(Sb)) &=&(SS)(a(Sb))=(Sa)(S(Sb))=(Sa)((SS)(Sb)) \\
&=&(Sa)((bS)(SS))=(Sa)(Sb)\text{.}
\end{eqnarray*}

Hence, either $a\in P$ or $b\in P$.

Conversely, assume that $AB\subseteq P$ where $A$ and $B$ are left ideals of 
$S$ such that $A\nsubseteq P$. Then there exists $x\in A$ such that $x\notin
P$. Now $x(Sy)\subseteq A(SB)\subseteq AB\subseteq P$, for all $y\in B$.
Hence by hypothesis, $y\in P$ for all $y\in B$. This shows that $P$ is
quasi-prime.
\end{proof}

\begin{corollary}
If $S$ is an LA-semigroup with left identity $e$, then a left ideal $P$ of $%
S $ is quasi-semiprime if and only if $a(Sa)\subseteq P$ implies $a\in P$,
for all $a\in S$.
\end{corollary}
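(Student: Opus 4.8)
The plan is to read this off as the diagonal case $b=a$ of the quasi-prime theorem just proved, with ``semiprime'' in place of ``prime'' throughout. No new tools are needed beyond those already in play: $SS=S$, the medial law $(2)$, the left invertive law $(1)$, the inclusion $S(Sa)\subseteq Sa$ quoted from $[10]$ (which makes $Sa$ a left ideal), and the fact that $e$ is a left identity, so that $a=ea\in Sa$.

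First I would handle the forward direction. Assume $P$ is quasi-semiprime and $a(Sa)\subseteq P$ for some $a\in S$. Substituting $b=a$ into the string of identities from the proof of the quasi-prime theorem gives $S\bigl(a(Sa)\bigr)=(Sa)(Sa)=(Sa)^{2}$, and since $P$ is a left ideal, $S\bigl(a(Sa)\bigr)\subseteq SP\subseteq P$; hence $(Sa)^{2}\subseteq P$. Because $Sa$ is a left ideal of $S$, quasi-semiprimeness of $P$ forces $Sa\subseteq P$, and therefore $a=ea\in Sa\subseteq P$, as required.

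For the converse, suppose $a(Sa)\subseteq P$ implies $a\in P$ for every $a\in S$, and let $I$ be a left ideal of $S$ with $I^{2}\subseteq P$. For an arbitrary $x\in I$, using $SI\subseteq I$ we get $x(Sx)\subseteq I(SI)\subseteq II=I^{2}\subseteq P$, so $x\in P$ by hypothesis. Since $x\in I$ was arbitrary, $I\subseteq P$, and $P$ is quasi-semiprime.

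The argument is essentially routine, and I do not expect any genuine obstacle. The one point that needs a little care is that quasi-semiprimeness is a statement about \emph{left ideals}, not about elements: in the forward direction one cannot apply it to $a$ itself but must pass to the principal left ideal $Sa$, square it, and then recover $a$ via $a=ea$.
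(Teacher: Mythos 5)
Your argument is correct and is exactly the specialization the paper intends: the corollary is stated without proof as an immediate consequence of the quasi-prime theorem, and you obtain it by setting $b=a$ in that theorem's identity chain, passing to the left ideal $Sa$, and recovering $a=ea\in Sa$. Your explicit note that quasi-semiprimeness must be applied to the left ideal $Sa$ rather than to the element $a$ is the one genuinely necessary observation, and you handle it correctly.
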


\begin{lemma}
If $I$ is a proper right (left) ideal of an LA-semigroup $S$ with left
identity $e$ then $e\notin I$.
\end{lemma}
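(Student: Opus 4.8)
The plan is to argue by contradiction, splitting into the right-ideal case and the left-ideal case, and in each case to use Lemma~1 (which says $eS = S$ and $Se = S$) to force $I$ to swallow all of $S$.

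First I would suppose $e \in I$ and treat the case where $I$ is a right ideal. Since $e \in I$, we have $eS \subseteq IS \subseteq I$. But $eS = S$ by Lemma~1, so $S \subseteq I$, i.e. $I = S$, contradicting the hypothesis that $I$ is proper.

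Next I would handle the case where $I$ is a left ideal, by the symmetric computation: from $e \in I$ we get $Se \subseteq SI \subseteq I$, and $Se = S$ by Lemma~1, so again $I = S$, a contradiction. Hence in either case $e \notin I$.

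I do not expect a genuine obstacle here: the statement is essentially an immediate corollary of Lemma~1 once one observes that membership of the left identity in a one-sided ideal lets one generate all of $S$ by multiplying on the appropriate side. The only thing to be careful about is matching the side of the ideal to the side of the identity's absorption property ($e$ is a \emph{left} identity, so $eS = S$ is what kills a right ideal, while $Se = S$ — which itself required the left-invertive law in Lemma~1 — is what kills a left ideal).
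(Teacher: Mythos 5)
Your proof is correct and follows essentially the same route as the paper: assume $e\in I$ and use $eS=S$ (resp.\ $Se=S$) from Lemma~1 together with the absorption property of the ideal to conclude $I=S$, contradicting properness. In fact you are slightly more complete than the paper, whose printed argument $S=eS\subseteq IS\subseteq I$ only treats the right-ideal case explicitly, whereas you also spell out the left-ideal case via $Se\subseteq SI\subseteq I$.
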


\begin{proof}
Suppose that $e\in I$. Then $S=eS\subseteq IS\subseteq I$ imply that $%
S\subseteq I$ and so $I=S$. This is a contradiction to the fact that $I$ is
proper. Hence $e\notin I$.
\end{proof}

It is easy to see from the above results that if $I$ is a proper ideal of $S$
with left identity $e$ then $H(a)=\{x\in S:ax=e\}\nsubseteq I$.

A left inverse $\acute{a}$ of $a$ in an LA-semigroup with left identity $e$,
becomes the right inverse of $a$. If all elements of $S$ are right
invertible then $S$ is called a right invertible LA-semigroup.

\begin{theorem}
If $I$ is a ideal of a right invertible LA-semigroup $S$ with left identity $%
e$, then $I$ is a proper ideal if and only if for all $a$ in $I$ there does
not exist $x$ in $S$ such that $ax=e$.
\end{theorem}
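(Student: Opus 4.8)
The plan is to establish the two implications of the biconditional separately, the forward one by contradiction and the converse by contraposition; in both cases the only facts needed are that a two-sided ideal is in particular a right ideal, and that a proper ideal of an LA-semigroup with left identity cannot contain $e$ (the lemma proved just above).

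First I would treat the \emph{only if} direction. Assume $I$ is a proper ideal and suppose, for contradiction, that there exist $a\in I$ and $x\in S$ with $ax=e$. Since $I$ is an ideal it is in particular a right ideal, so $e=ax\in IS\subseteq I$, i.e. $e\in I$; this contradicts the fact that a proper ideal of $S$ does not contain the left identity $e$. Hence no such $x$ can exist for any $a\in I$.

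For the \emph{if} direction I would argue contrapositively. Suppose $I$ is not proper, so $I=S$ and in particular $e\in I$. Because $e$ is a left identity we have $ee=e$, so taking $a=e\in I$ and $x=e\in S$ gives $ax=e$, and the stated condition fails. Equivalently, since $S$ is right invertible the element $e\in I=S$ has a right inverse in $S$, which directly contradicts the hypothesis. By contraposition, if for every $a\in I$ there is no $x\in S$ with $ax=e$, then $I$ is proper.

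The argument is short and I do not expect a genuine obstacle; the only points deserving care are locating the one place where "$I$ is a right ideal" is used, namely the step $e=ax\in IS\subseteq I$, and observing that the right-invertibility hypothesis is included mainly to fit the surrounding discussion of right inverses and of the set $H(a)=\{x\in S:ax=e\}$, since the identity $ee=e$ already suffices for the converse.
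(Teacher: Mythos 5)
Your proposal is correct and follows essentially the same route as the paper: the forward direction is the same contradiction argument via $e=ax\in IS\subseteq I$ and the lemma that a proper ideal cannot contain $e$, while the converse (which the paper dismisses as straightforward) you spell out correctly by contraposition using $ee=e$. Your side remark that right invertibility is not really needed for the statement as phrased is also accurate.
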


\begin{proof}
Let $I$ be a proper right ideal of $S$ and $a\in I$ be a right invertible
element of $S$. Then there exists $x\in S$ such that $e=ax\in IS\subseteq I$
which gives $e\in I$ and so by Lemma $4$, $I=S$. But this is a
contradiction. Hence for all $a$ in $I$ there does not exist $x$ in $S$ such
that $ax=e$.

The converse is straight forward.
\end{proof}

An ideal $I$ of an LA-semigroup $S$ is said to be strongly irreducible if
and only if for ideals $H$ and $K$ of $S,$ $H\cap K\subseteq I$ implies that 
$H\subseteq I$ or $K\subseteq I$.

\begin{proposition}
An ideal $I$ of an LA-semigroup $S$ is prime if and only if it is semiprime
and strongly irreducible.
\end{proposition}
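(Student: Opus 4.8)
The plan is to prove both implications directly and elementarily; the only structural facts needed are that the intersection of two ideals is again an ideal (already noted in the text) and that the product of two two-sided ideals is contained in each of them, hence in their intersection. In fact the left invertive law plays no role here — everything follows from the two-sided ideal axioms.

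First I would do the forward direction. Suppose $I$ is prime. That $I$ is semiprime is immediate: if $J$ is an ideal with $J^{2}\subseteq I$, apply the primeness condition with $A=B=J$ to get $J\subseteq I$. For strong irreducibility, let $H$ and $K$ be ideals with $H\cap K\subseteq I$. Since $H$ is a right ideal, $HK\subseteq HS\subseteq H$; since $K$ is a left ideal, $HK\subseteq SK\subseteq K$; therefore $HK\subseteq H\cap K\subseteq I$. Primeness of $I$ now yields $H\subseteq I$ or $K\subseteq I$, which is exactly strong irreducibility.

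For the converse, assume $I$ is semiprime and strongly irreducible, and let $A$, $B$ be ideals of $S$ with $AB\subseteq I$. Then $A\cap B$ is an ideal, and since $A\cap B\subseteq A$ and $A\cap B\subseteq B$ we get $(A\cap B)^{2}=(A\cap B)(A\cap B)\subseteq AB\subseteq I$. Because $I$ is semiprime, $A\cap B\subseteq I$; because $I$ is strongly irreducible, $A\subseteq I$ or $B\subseteq I$. Hence $I$ is prime, completing the equivalence.

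There is no real obstacle in this argument; the only place needing a moment's care is the two containments $HK\subseteq H\cap K$ and $(A\cap B)^{2}\subseteq AB$, each of which uses that the ideals involved are two-sided — a one-sided ideal would not make the argument go through. Everything else is bookkeeping.
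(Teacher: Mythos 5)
Your proof is correct and is precisely the standard argument the paper leaves unstated (the paper only remarks ``The proof is obvious''). Both directions check out: $HK\subseteq H\cap K$ and $(A\cap B)^2\subseteq AB$ are exactly the containments needed, and your observation that only the two-sided ideal axioms are used, not the left invertive law, is accurate.
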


The proof is obvious.

\begin{theorem}
Let $S$ be an LA semigroup and $\{P_{i}:i\in N\}$ be a family of prime
ideals totally ordered under inclusion in $S$. Then $\cap P_{i}$ is a prime
ideal.
\end{theorem}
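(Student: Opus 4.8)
The plan is to first observe that $\bigcap_{i\in N} P_i$ is again an ideal of $S$ — this is already recorded in the excerpt (the intersection of a family of ideals is an ideal) — so the only thing left to establish is the prime property. I would then argue by contraposition: assuming $A$ and $B$ are ideals of $S$ with $A\nsubseteq\bigcap_i P_i$ and $B\nsubseteq\bigcap_i P_i$, I would show that $AB\nsubseteq\bigcap_i P_i$, which is exactly what is needed.

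From the two hypotheses there are indices $j,k\in N$ with $A\nsubseteq P_j$ and $B\nsubseteq P_k$. Since the family $\{P_i:i\in N\}$ is totally ordered under inclusion, either $P_j\subseteq P_k$ or $P_k\subseteq P_j$; relabelling if necessary, assume $P_j\subseteq P_k$, so $P_j$ is the smaller of the two. Suppose, for contradiction, that $AB\subseteq\bigcap_i P_i$; then in particular $AB\subseteq P_j$. Because $P_j$ is prime and $A\nsubseteq P_j$, we must have $B\subseteq P_j$, and hence $B\subseteq P_k$, contradicting the choice of $k$. Therefore $AB\nsubseteq\bigcap_i P_i$, so $\bigcap_i P_i$ is prime.

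There is no genuinely deep obstacle here; the step that needs a little care is matching the total order to the right pair $P_j,P_k$ produced by the two witnesses, and then invoking primeness of the \emph{smaller} one so that the inclusion it yields propagates up to the larger one and produces the contradiction. It is also worth noting that the argument only uses the index set being nonempty (which it is) and the bare definitions of prime ideal and of intersections of ideals; no left identity, regularity, or medial law is needed, so the result holds for an arbitrary LA-semigroup.
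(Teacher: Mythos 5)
Your argument is correct and complete: passing to the smaller of the two witnessing ideals $P_j$, $P_k$ via the total order, applying its primeness to $AB$, and propagating the resulting inclusion up to the larger one yields the desired contradiction, and (as you note) nothing beyond the bare definitions is used. The paper offers no proof of its own here --- it simply defers to reference $[9]$ --- and what you have written is precisely the standard argument being invoked, so there is nothing to fault; the only point needing care, the asymmetry between the witnesses $A\nsubseteq P_j$ and $B\nsubseteq P_k$ under the relabelling, is handled correctly since in either ordering primeness of the smaller ideal forces one of $A$, $B$ into the larger one.
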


The proof is same as in $[9]$.

\begin{theorem}
For each ideal $I$ there exists a minimal prime ideal of $I$ in an
LA-semigroup $S$.
\end{theorem}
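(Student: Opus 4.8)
The plan is to obtain the minimal prime ideal by a Zorn's lemma argument applied to the family of all prime ideals containing $I$, ordered by reverse inclusion, with the chain condition supplied by the preceding theorem on intersections of chains of prime ideals.

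First I would set $\Sigma=\{P:P\text{ is a prime ideal of }S\text{ with }I\subseteq P\}$ and check $\Sigma\neq\emptyset$. The point is that $S$ itself is a prime ideal of $S$: for all ideals $A,B$ of $S$ the inclusion $AB\subseteq S$ holds automatically and forces $A\subseteq S$, so the defining condition of primeness is satisfied; and of course $I\subseteq S$. Hence $S\in\Sigma$.

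Next I would partially order $\Sigma$ by reverse inclusion, so that a minimal prime ideal of $I$ is exactly a maximal element of $(\Sigma,\supseteq)$. To apply Zorn's lemma I must verify that every chain in $(\Sigma,\supseteq)$ has an upper bound in $\Sigma$; equivalently, that for every subfamily $\{P_{j}:j\in J\}\subseteq\Sigma$ totally ordered under inclusion the intersection $P=\bigcap_{j\in J}P_{j}$ again lies in $\Sigma$. Here $P$ is an ideal of $S$, being an intersection of ideals, and $I\subseteq P_{j}$ for every $j$ gives $I\subseteq P$; finally $P$ is prime by the theorem immediately preceding this one, which states that the intersection of a family of prime ideals totally ordered under inclusion is prime. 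Thus $P\in\Sigma$ is a lower bound of the chain with respect to $\subseteq$, i.e. an upper bound with respect to $\supseteq$. Zorn's lemma then yields a maximal element $P_{0}$ of $(\Sigma,\supseteq)$: a prime ideal with $I\subseteq P_{0}$ admitting no prime ideal of $S$ strictly between $I$ and $P_{0}$, which is precisely a minimal prime ideal of $I$.

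The only point requiring care is the chain condition. The cited theorem is stated for a family indexed by $\mathbb{N}$, while Zorn's lemma needs it for arbitrary chains; I would observe that its proof uses only the total ordering of the family and goes through verbatim for an arbitrary totally ordered index set $J$. Once this is noted, the argument is complete, and indeed it works for every ideal $I$ of $S$, with no hypothesis of a left identity needed.
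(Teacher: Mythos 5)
Your proof is correct and is essentially the argument the paper intends: the paper gives no details here, deferring to [9], where exactly this Zorn's lemma argument is used --- order the prime ideals containing $I$ by reverse inclusion, note $S$ itself is a (trivially) prime ideal so the family is nonempty, and handle chains via the preceding theorem on intersections of totally ordered families of prime ideals. Your observation that the preceding theorem, though stated for a family indexed by $N$, must be (and can be) read for arbitrary chains is a worthwhile point of care.
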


The proof is same as in $[9]$.

\section{Ideals in a regular LA-semigroup}

\begin{lemma}
Every right ideal of a regular LA-semigroup is an ideal.
\end{lemma}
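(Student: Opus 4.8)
The plan is to prove that every right ideal $I$ of a regular LA-semigroup $S$ is also a left ideal, so that $I$ becomes two-sided. The tools are exactly two: the observation (already recorded at the start of Section~2) that a regular LA-semigroup satisfies $S=S^{2}$, together with the left invertive law $(1)$, which, holding for all elements, also holds at the level of subsets.

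First I would fix a right ideal $I$ of $S$, so that $IS\subseteq I$ by hypothesis, and aim to show $SI\subseteq I$. Using $S=S^{2}$, I rewrite $SI=(SS)I$, and then apply the left invertive law $(ab)c=(cb)a$ with $a=S$, $b=S$, $c=I$ to obtain
\begin{equation*}
SI=(SS)I=(IS)S\subseteq IS\subseteq I,
\end{equation*}
where the last two inclusions use $IS\subseteq I$ twice. Hence $SI\subseteq I$, so $I$ is a left ideal, and therefore a two-sided ideal of $S$.

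An essentially equivalent elementwise argument can be given: for $s\in S$ and $i\in I$, regularity yields $s=(sx)s$ for some $x\in S$, and then $(1)$ gives $si=\bigl((sx)s\bigr)i=(is)(sx)$; since $is\in IS\subseteq I$, the product $(is)(sx)$ lies in $IS\subseteq I$. This is the content of the earlier remark that in any LA-semigroup with $S=S^{2}$ every right ideal is a left ideal, specialized via $S=S^{2}$ for regular $S$.

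I expect no genuine obstacle. The only points needing a moment's care are noting that $S=S^{2}$ for a regular LA-semigroup (each $a=(ax)a\in S^{2}$) and that the left invertive law may be applied to the subsets $S$ and $I$ because it holds for all elements. With $S=S^{2}$ available, the left invertive law completes the proof in a single line.
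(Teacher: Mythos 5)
Your proof is correct, and your elementwise version is exactly the paper's proof: write $s=(sx)s$ by regularity and apply the left invertive law to get $si=((sx)s)i=(is)(sx)\in IS\subseteq I$. Your primary set-level argument $SI=(SS)I=(IS)S\subseteq I$ is just a repackaging that uses only $S=S^{2}$ (a fact the paper itself records in its remark on $S=S^{2}$), so the approach is essentially the same.
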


\begin{proof}
Let $I$ be a right ideal of $S$. Then for each $s$ in $S$ there exists $x$
such that $s=(sx)s$. If $i\in I$ then, $si=((sx)s)i=(is)(sx)\subseteq I$
imply that $I$ is a left ideal of $S$.
\end{proof}

It is easy to see that $SI\subseteq IS$.

\begin{lemma}
If $S$ is a regular LA-semigroup then $PQ=P\cap Q$, where $P$ is a right and 
$Q$ is a left ideal.
\end{lemma}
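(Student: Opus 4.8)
The plan is to establish the equality by proving the two inclusions $PQ\subseteq P\cap Q$ and $P\cap Q\subseteq PQ$ separately.

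For the first inclusion, I would use only that $P$ is a right ideal, that $Q$ is a left ideal, and that $PQ\subseteq S$. Since $P$ is a right ideal, $PQ\subseteq PS\subseteq P$; since $Q$ is a left ideal, $PQ\subseteq SQ\subseteq Q$. Hence $PQ\subseteq P\cap Q$. This half is entirely routine and uses neither the left invertive law nor regularity.

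For the reverse inclusion, regularity is the decisive ingredient. Take $a\in P\cap Q$ and choose $x\in S$ with $a=(ax)a$. Because $a\in P$ and $P$ is a right ideal, $ax\in PS\subseteq P$; and since $a\in Q$ as well, the product $(ax)a$ lies in $PQ$. Thus $a=(ax)a\in PQ$, which gives $P\cap Q\subseteq PQ$.

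Combining the two inclusions yields $PQ=P\cap Q$. I do not expect any genuine obstacle here: the only point requiring care is keeping the roles of the right ideal $P$ and the left ideal $Q$ in the correct order when substituting the regularity factorization $a=(ax)a$, so that $ax$ is absorbed on the $P$ side and $a$ is taken on the $Q$ side. Note in particular that the left invertive law $(1)$ is not needed for this proof — only the definitions of left and right ideal and the regularity condition.
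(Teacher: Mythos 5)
Your proof is correct and follows essentially the same route as the paper: the inclusion $PQ\subseteq P\cap Q$ is the routine absorption argument, and for the reverse inclusion the paper likewise writes $a=(ab)a$ with $ab\in PS\subseteq P$ and $a\in Q$, so that $a\in PQ$. The only difference is that you spell out the easy inclusion which the paper merely asserts.
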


\begin{proof}
Let $P$ and $Q$ be right and left ideals of $S$ with $PQ\subseteq P\cap Q$.
If $a\in P\cap Q$, then there exists $b$ in $S$ such that $a=(ab)a\in PQ$.
Hence $PQ=P\cap Q$.
\end{proof}

Note that if $I$ is a left ideal of an LA-semigroup then by Lemma $6$, $%
I=I^{2}$. Also it is worth mentioning that ideals of a regular LA-semigroup
are semiprime.

The following Theorem is an easy consequence of Lemma $6$.

\begin{theorem}
The set of ideals of a regular LA-semigroup $S$ form a semilattice, $(L_{S}$,%
$\Lambda )$, where $A\Lambda B=AB$, $A$ and $B$ are ideals of $S$.
\end{theorem}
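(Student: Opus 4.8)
The plan is to reduce everything to Lemma 6 together with the elementary fact, already recorded in the excerpt, that the intersection of ideals is an ideal. First I would observe that in a regular LA-semigroup an ideal is, by definition, simultaneously a left ideal and a right ideal; hence if $A$ and $B$ are ideals of $S$, then $A$ qualifies as a right ideal and $B$ as a left ideal, so Lemma 6 applies and gives $AB = A\cap B$. In particular $AB$ is again an ideal, so the binary operation $A\,\Lambda\,B = AB$ is well defined on the set $L_S$ of ideals of $S$, and on $L_S$ it coincides with set-theoretic intersection.

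Once the identification $A\,\Lambda\,B = A\cap B$ is in hand, the semilattice axioms are immediate. Commutativity: $A\,\Lambda\,B = A\cap B = B\cap A = B\,\Lambda\,A$ (equivalently $AB = A\cap B = B\cap A = BA$, applying Lemma 6 a second time with the roles of ``left'' and ``right'' exchanged). Associativity: $(A\,\Lambda\,B)\,\Lambda\,C = (A\cap B)\cap C = A\cap(B\cap C) = A\,\Lambda\,(B\,\Lambda\,C)$. Idempotency: $A\,\Lambda\,A = A\cap A = A$, which also matches the earlier remark that $I=I^2$ for a left ideal of a regular LA-semigroup. Therefore $(L_S,\Lambda)$ is a commutative idempotent semigroup, i.e. a semilattice.

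Since no step requires more than invoking Lemma 6 and closure of ideals under intersection, I do not anticipate a real obstacle. The only point deserving a line of care is the first one --- confirming that Lemma 6 is genuinely applicable, i.e. that a two-sided ideal may play the role of either the right factor $P$ or the left factor $Q$ --- after which the argument is simply the observation that $(L_S,\cap)$ is a semilattice. If one preferred a self-contained proof, one could instead verify commutativity, associativity and idempotency of the product $AB$ on ideals directly from the left invertive law $(1)$, the medial law $(2)$, and regularity, but routing through Lemma 6 makes that unnecessary.
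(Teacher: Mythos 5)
Your proposal is correct and follows exactly the route the paper intends: the paper's entire justification is that the theorem ``is an easy consequence of Lemma 6,'' and you have simply filled in the details by identifying $A\Lambda B=AB=A\cap B$ and reading off commutativity, associativity and idempotency from set intersection. No discrepancy with the paper's approach.
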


\begin{theorem}
A regular LA-semigroup $S$ is fully prime if and only if ideal$(S)$ is
totally ordered under inclusion.
\end{theorem}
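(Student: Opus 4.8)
The plan is to mimic the argument of Theorem 2, replacing the use of idempotency of ideals by Lemma 6, which in a regular LA-semigroup gives $AB=A\cap B$ for a right ideal $A$ and a left ideal $B$. First I would record that, by Lemma 5, every right ideal of $S$ is an ideal, so in particular every ideal of $S$ is simultaneously a left and a right ideal; hence Lemma 6 applies to an arbitrary pair of ideals of $S$, and $AB=A\cap B$ whenever $A$ and $B$ are ideals of $S$. I would also use the fact, recorded earlier in the paper, that the intersection of ideals is again an ideal.

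For the forward implication, suppose $S$ is fully prime and let $P$, $Q$ be ideals of $S$. Then $P\cap Q$ is an ideal, hence prime by hypothesis. By the observation above, $PQ=P\cap Q\subseteq P\cap Q$, so primeness of $P\cap Q$ forces $P\subseteq P\cap Q$ or $Q\subseteq P\cap Q$, that is, $P\subseteq Q$ or $Q\subseteq P$. Thus ideal$(S)$ is totally ordered under inclusion.

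For the converse, suppose ideal$(S)$ is totally ordered under inclusion, and let $I$, $J$, $P$ be ideals of $S$ with $IJ\subseteq P$. By total order we may assume, say, $I\subseteq J$ (the other case being symmetric). Then $I\cap J=I$, and by Lemma 6, $I=I\cap J=IJ\subseteq P$, so $I\subseteq P$. Hence $P$ is prime, and $S$ is fully prime.

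Since both directions go through essentially formally, I do not anticipate a genuine obstacle; the only point requiring care is the preliminary remark that every ideal of a regular LA-semigroup is at once a left and a right ideal, which is exactly what licenses the application of Lemma 6 to an arbitrary pair of ideals rather than to a right ideal and a left ideal.
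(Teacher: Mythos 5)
Your proposal is correct and follows essentially the same route the paper intends: the paper's proof is the one-line remark that the result ``follows from Lemma 6 and Theorem 2,'' and you have simply spelled out that combination, using $AB=A\cap B$ from Lemma 6 in place of idempotency inside the argument of Theorem 2. If anything, your version is slightly more careful than the paper's, since you rerun the Theorem 2 argument directly rather than citing it (Theorem 2 as stated assumes a left identity, which a regular LA-semigroup need not have).
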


The proof follows from Lemma $6$ and Theorem $2$.

\begin{theorem}
Every ideal in a regular LA-semigroup $S$ is prime if and only if it is
strongly irreducible.
\end{theorem}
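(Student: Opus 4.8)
The plan is to obtain this as a short corollary of the Proposition already established, namely that an ideal of an LA-semigroup is prime if and only if it is both semiprime and strongly irreducible, combined with the observation recorded just after Lemma~6 that every ideal of a regular LA-semigroup is semiprime.

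First I would dispose of the ``only if'' direction, which needs nothing new. Suppose every ideal of $S$ is prime and let $I$ be an arbitrary ideal. By the quoted Proposition a prime ideal is in particular strongly irreducible, so $I$ is strongly irreducible; since $I$ was arbitrary, every ideal of $S$ is strongly irreducible.

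For the ``if'' direction, assume that every ideal of $S$ is strongly irreducible. The extra ingredient is that, $S$ being regular, every ideal is also semiprime: if $J$ is an ideal with $J^{2}\subseteq I$ for some ideal $I$, then, since in a regular LA-semigroup an ideal is simultaneously a left and a right ideal, Lemma~6 applied with $P=Q=J$ gives $J=J\cap J=JJ=J^{2}\subseteq I$, so $I$ is semiprime. Hence every ideal of $S$ is both semiprime and strongly irreducible, and the Proposition immediately yields that every ideal of $S$ is prime.

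There is no genuinely hard step here; the only point that deserves to be written out carefully is the justification that $J=J^{2}$ for an arbitrary ideal $J$, i.e.\ the legitimate application of Lemma~6 with $P=Q=J$. Once that is in place, the equivalence is a direct citation of the Proposition characterizing prime ideals.
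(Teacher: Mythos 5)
Your proof is correct, but it follows a different decomposition than the paper's. The paper argues directly from Lemma~6: for any two ideals $A$ and $B$ of a regular LA-semigroup, $AB=A\cap B$, so the hypothesis of primeness ($AB\subseteq P$) and the hypothesis of strong irreducibility ($A\cap B\subseteq P$) are literally interchangeable, and both directions of the equivalence fall out in one line each. You instead route the argument through the Proposition that an ideal is prime if and only if it is semiprime and strongly irreducible, and supply the missing semiprimeness from regularity by applying Lemma~6 with $P=Q=J$ to get $J=J^{2}$ for every ideal $J$ (legitimate, as you note, because a two-sided ideal is simultaneously a right and a left ideal; the paper records this semiprimeness observation in the remark just after Lemma~6, so you could also simply cite it). Both arguments ultimately rest on Lemma~6, but yours isolates exactly where regularity enters: the implication ``prime implies strongly irreducible'' holds in any LA-semigroup, since $HK\subseteq H\cap K$ for any two ideals, and regularity is needed only to force every ideal to be idempotent and hence every ideal to be semiprime. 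The paper's version is shorter and self-contained but uses the full identity $AB=A\cap B$ in both directions. Either write-up is acceptable.
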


\begin{proof}
Assume that $P$ is a prime ideal of $S$. Then there exist ideals $A$ and $B$
in $S$ such that $AB\subseteq P$. Then by Lemma $6$, $AB=A\cap B$ implies
that either $A\subseteq P$ or $B\subseteq P$. Hence $P$ is strongly
irreducible.

Conversely, assume that every ideal of a regular LA-semigroup $S$ is
strongly irreducible. Let $P$ be a strongly irreducible ideal of $S$. Then $%
A\cap B\subseteq P$ $($where $A$ and $B$ are any ideals of $S)$ implies that
either $A\subseteq P$ or $B\subseteq P$. But by Lemma $6$, $AB=A\cap B$.
Hence $P$ is prime.
\end{proof}

\end{document}